\author{Paul \textsc{Poncet}}
\address{CMAP, \'{E}cole Polytechnique, Route de Saclay, 91128 Palaiseau Cedex, France \\
and INRIA, Saclay--\^{I}le-de-France}
\email{poncet@cmap.polytechnique.fr}
\def\twoheaddownarrow{\rlap{$\downarrow$}\raise-.5ex\hbox{$\downarrow$}}
\def\twoheaduparrow{\rlap{$\uparrow$}\raise.5ex\hbox{$\uparrow$}}
\newtheorem{theorem}{Theorem}[section]
\newtheorem{corollary}[theorem]{Corollary}
\newtheorem{proposition}[theorem]{Proposition}
\newtheorem{lemma}[theorem]{Lemma}
\theoremstyle{definition}
\newtheorem{remark}[theorem]{Remark}
\begin{document}

\title{A memo on chains and their topologies \\ (work in progress)}

\date{\today}

\subjclass[2010]{06A05, 
                 06A06} 

\keywords{chain, totally ordered set, partially ordered set, intrinsic topology, order topology, interval topology, Lawson topology, Scott topology, continuous poset, compact pospace}

\begin{abstract}	
We summarize some facts on chains (totally ordered sets), from an order-theoretic and from a topological point of view. We highlight the fact that many classical theorems that are true for partially ordered sets under some completeness assumption remain true for chains without any kind of completeness. 
\end{abstract}

\maketitle







\section{Every chain is a continuous poset}

A \textit{partially ordered set} or \textit{poset} $(P,\leqslant)$ is a set $P$ equipped with a reflexive, transitive, and antisymmetric binary relation $\leqslant$. 
If $P$ has a least (resp.\ greatest) element, we customarily denote it by $0$ (resp.\ by $\infty$). 
A poset is \textit{complete} (resp.\ \textit{conditionally-complete}) if every subset (resp.\ every nonempty subset bounded above) has a supremum, in which case every subset (resp.\ every nonempty lower-bounded subset) has an infimum. A poset is \textit{up-complete} if every nonempty subset has a supremum. 

A nonempty subset $C$ of $P$ is a \textit{chain} (or a \textit{totally ordered subset}) if, for all $x, y \in C$, one of the relations $x \leqslant y$ and $y \leqslant x$ holds. We write $x < y$ when $x \leqslant y$ and $x \neq y$. 
A chain $M$ is \textit{maximal} if, for each chain $C$ in $P$, $C \supset M$ implies $C = M$. 

A nonempty subset $D$ of a poset $P$ is \textit{directed} if, for all $x, y \in D$, one can find $z \in D$ such that $x \leqslant z$ and $y \leqslant z$. 
We say that $x \in P$ is \textit{way-below} $y \in P$, written $x \ll y$, if, for every directed subset $D$ with a supremum $\bigvee D$, $y \leqslant \bigvee D$ implies $x \leqslant d$ for some $d \in D$. 
We then say that the poset $P$ is \textit{continuous} if $\twoheaddownarrow x := \{ y \in P : y \ll x \}$ is directed and $x = \bigvee \twoheaddownarrow x$, for all $x \in P$. 
The next result describes the way-below relation on a chain. 

\begin{lemma}\label{lem:impl}
Let $(P,\leqslant)$ be a chain. The following implications hold: 
$$
x < y \Rightarrow x \ll y \Rightarrow x \leqslant y, 
$$ 
for all $x, y \in P$. 
\end{lemma}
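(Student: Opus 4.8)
The plan is to prove the two implications separately.

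For the second implication, $x \ll y \Rightarrow x \leqslant y$: the key observation is that $\{y\}$ is itself a directed set (trivially, since for $y, y$ we may take $z = y$), and it has supremum $\bigvee\{y\} = y$. Since $y \leqslant \bigvee\{y\}$ holds, the definition of way-below applied to this directed set forces $x \leqslant d$ for some $d \in \{y\}$, i.e.\ $x \leqslant y$. This argument actually works in any poset, not just a chain, so totality is not needed here.

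For the first implication, $x < y \Rightarrow x \ll y$: I would take an arbitrary directed subset $D$ with supremum $\bigvee D$ and assume $y \leqslant \bigvee D$. I must produce $d \in D$ with $x \leqslant d$. Suppose for contradiction that $x \not\leqslant d$ for every $d \in D$. Here is where totality enters: since $P$ is a chain, $x \not\leqslant d$ gives $d < x$, hence $d \leqslant x$, for every $d \in D$. Thus $x$ is an upper bound of $D$, so $\bigvee D \leqslant x$. Combined with $y \leqslant \bigvee D$ this yields $y \leqslant x$, contradicting $x < y$. Therefore some $d \in D$ satisfies $x \leqslant d$, and since $D$ was arbitrary, $x \ll y$.

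I do not anticipate a genuine obstacle; both parts are short. The only point requiring a little care is making sure the trivial directed set $\{y\}$ is admissible in the definition of $\ll$ (it is, since singletons are directed and have an obvious supremum), and recognizing that the first implication is exactly where the chain hypothesis is used — in a general poset $x < y$ certainly need not imply $x \ll y$. I would present the proof as two brief paragraphs mirroring the two arrows.
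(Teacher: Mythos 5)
Your proof is correct and follows essentially the same route as the paper: the first implication is established by exactly the same contradiction (if no element of the directed set dominates $x$, totality forces every element below $x$, giving $\bigvee D \leqslant x < y \leqslant \bigvee D$). The paper leaves the second implication implicit as the standard fact valid in any poset, which your singleton-$\{y\}$ argument correctly supplies.
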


\begin{proof}
Assume that $x < y$ and that $y \leqslant \bigvee B$, for some $B \subset P$ with supremum. If $b < x$ for all $b \in B$, then $\bigvee B \leqslant x < y \leqslant \bigvee B$, which is not. Thus, $x \leqslant b$ for some $b \in B$, and $x \ll y$.
\end{proof}

We write $\Downarrow\!\! x$ for $P \ \setminus \uparrow\!\! x = \{ y \in P : y < x \}$, and $\Uparrow\!\! x$ dually. 
Recall that an element $x$ of a poset $P$ is \textit{compact} if $x \ll x$. 

In \cite[Example~I-1.7]{Gierz03}, we read that every complete chain is a continuous lattice, but the following more specific result has not been asserted before Zhao and Zhou \cite{Zhao06}. 


\begin{theorem}
Let $(P,\leqslant)$ be a chain. Every point $x$ of $P$ is compact or satisfies $x = \bigvee \Downarrow\!\! x$. In particular, $P$ is a completely distributive (or supercontinuous) poset, hence also a continuous poset.  
\end{theorem}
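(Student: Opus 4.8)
The plan is to prove the pointwise dichotomy first and then deduce complete distributivity, and hence continuity, from it together with Lemma~\ref{lem:impl}; the dichotomy is the only substantial step.

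To prove the dichotomy I would fix $x\in P$, assume $x\neq\bigvee\Downarrow\!\! x$, and show that $x$ is compact, i.e.\ $x\ll x$. So let $D$ be a directed subset of $P$ having a supremum with $x\leqslant\bigvee D$, and suppose no element of $D$ dominates $x$. Since $P$ is a chain this forces $d<x$ for all $d\in D$, that is $D\subseteq\Downarrow\!\! x$; then $x$ is an upper bound of the nonempty set $D$, so $\bigvee D\leqslant x$, whence $\bigvee D=x$. Consequently every upper bound of $\Downarrow\!\! x$ is an upper bound of $D$ and thus $\geqslant\bigvee D=x$; since $x$ itself is an upper bound of $\Downarrow\!\! x$, this gives $x=\bigvee\Downarrow\!\! x$, a contradiction. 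Hence $x\leqslant d$ for some $d\in D$, which is exactly $x\ll x$. (A least element of $P$, if any, is trivially way-below itself, so it too lands in the compact case.)

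Next I would introduce the \emph{totally-below} relation $\lhd$ --- $y\lhd x$ iff for every $S\subseteq P$ with a supremum, $x\leqslant\bigvee S$ entails $y\leqslant s$ for some $s\in S$ --- so that complete distributivity (supercontinuity) of $P$ amounts to: for each $x$, the set $\{y:y\lhd x\}$ is directed with supremum $x$. Two preliminary remarks: the argument in the proof of Lemma~\ref{lem:impl} is written for an arbitrary (not necessarily directed) $B$, so it in fact shows $y<x\Rightarrow y\lhd x$; and applying the definition to $S=\{x\}$ gives $y\lhd x\Rightarrow y\leqslant x$. Hence $\Downarrow\!\! x\subseteq\{y:y\lhd x\}\subseteq\downarrow\!\! x$ for every $x$. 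Now fix $x$ and split according to the dichotomy. If $x=\bigvee\Downarrow\!\! x$, these inclusions force $\bigvee\{y:y\lhd x\}=x$, and $\{y:y\lhd x\}$, being a subchain of $P$, is directed (and nonempty once $\Downarrow\!\! x\neq\emptyset$). If instead $x$ is compact --- equivalently, $x$ has a lower cover or is the least element --- one checks $x\lhd x$ directly, essentially by rerunning the dichotomy argument over an arbitrary $S$; then $x$ is the greatest element of $\{y:y\lhd x\}$, so its directedness and the value of $\bigvee\{y:y\lhd x\}$ are immediate. Finally, since $y\lhd x$ always implies $y\ll x$, we have $\twoheaddownarrow x\supseteq\{y:y\lhd x\}$, so $x=\bigvee\{y:y\lhd x\}\leqslant\bigvee\twoheaddownarrow x\leqslant x$ by Lemma~\ref{lem:impl}, and $\twoheaddownarrow x$ is a nonempty subchain, hence directed; thus $P$ is continuous.

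The main obstacle is the dichotomy, and the one genuinely delicate point there is that suprema in a chain may ``jump'': $\bigvee\Downarrow\!\! x$ can equal $x$ even when $x$ has no lower cover, so the argument must proceed through upper bounds rather than by producing a maximum of $\Downarrow\!\! x$. The only other care needed concerns the empty-family conventions (for ``directed'' and for $\lhd$) at a least element of $P$; once those are fixed, the remaining steps are routine.
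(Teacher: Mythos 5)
Your proof is correct and follows essentially the same route as the paper: your dichotomy argument is just the contrapositive of the paper's (a directed witness of non-compactness lies in $\Downarrow\!\! x$, has supremum $x$, and forces every upper bound of $\Downarrow\!\! x$ to dominate $x$). The paper leaves the ``in particular'' clause implicit, whereas you spell it out via the totally-below relation and the observation that the proof of Lemma~\ref{lem:impl} works for arbitrary subsets $B$; that elaboration, including your care about the empty-family convention at a least element, is accurate and consistent with the paper's intent.
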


\begin{proof}
Suppose that $x$ is not compact. This means that there exists some $B_0 \subset \Downarrow\!\! x$ with supremum such that $x = \bigvee B_0$. Let $u$ be an upper bound of $\Downarrow\!\! x$. Since $B_0 \subset \Downarrow\!\! x$, $u$ is also an upper bound of $B_0$, so that $x = \bigvee B_0 \leqslant u$. This proves that $x$ is the supremum of $\Downarrow\!\! x$. 
\end{proof}

A transitive binary relation $\prec$ on a set is \textit{idempotent} if, for all $x, y$ such that $x \prec y$, there exists some $z$ with $x \prec z \prec y$. 
A poset is \textit{order-dense} if the strict order relation $<$ is idempotent. 
Remembering that the way-below relation of a continuous poset is idempotent, the following result is a clear consequence of the previous theorem. 

\begin{corollary}\label{coro:eq}
Let $(P,\leqslant)$ be a chain. The following conditions are equivalent:
\begin{enumerate}
	\item\label{coro:eq2} the relations $<$ and $\ll$ agree (except perhaps at $0$), 
	\item\label{coro:eq3} $P$ has no compact element (except perhaps $0$), 
\end{enumerate}
If these conditions are satisfied, then $P$ is order-dense. Conversely, if $P$ is conditionally-complete and order-dense, then these conditions are satisfied. 
\end{corollary}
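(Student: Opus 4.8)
The plan is to handle the equivalence \eqref{coro:eq2}$\Leftrightarrow$\eqref{coro:eq3} first and then the two remaining one-way statements, using nothing beyond Lemma~\ref{lem:impl}, the preceding Theorem, and the definitions. For the equivalence: Lemma~\ref{lem:impl} already gives $x<y\Rightarrow x\ll y\Rightarrow x\leqslant y$ for all $x,y$, so $x<y$ always forces $x\ll y$, while $x\ll y$ together with $x\neq y$ forces $x<y$. Hence $\ll$ and $<$ can disagree only on diagonal pairs $(x,x)$ with $x\ll x$, that is, exactly at the compact elements of $P$. Since a least element $0$, when it exists, is automatically compact ($0$ lies below every member of every nonempty set, so $0\ll 0$), the statement ``$<$ and $\ll$ agree except perhaps at $0$'' says precisely ``$0$ is the only possible compact element'', which is \eqref{coro:eq3}. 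This step is pure bookkeeping.

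Next I would derive order-density from \eqref{coro:eq3}. Take $x<y$ and suppose, for contradiction, that no $z$ satisfies $x<z<y$. In the chain $P$ this forces $\Downarrow\!\! y=\{w:w\leqslant x\}$, whose supremum is $x$; since $x\neq y$, the Theorem makes $y$ compact, hence $y=0$ by \eqref{coro:eq3}, contradicting $x<y=0$. So such a $z$ exists and $<$ is idempotent. (One could instead quote idempotency of $\ll$ on the continuous poset $P$ and transport it to $<$ through the equivalence, but that still requires a separate little argument at $0$, so I prefer the covering argument above.)

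For the converse I would assume $P$ conditionally-complete and order-dense and suppose, for contradiction, that some $x\in P$ is compact and is not the least element of $P$. Then $\Downarrow\!\! x$ is nonempty (otherwise $x$ would be least), it is a chain and hence directed, and it is bounded above by $x$, so conditional completeness yields $s:=\bigvee\Downarrow\!\! x$ with $s\leqslant x$. Order-density excludes $s<x$, since any point strictly between $s$ and $x$ would belong to $\Downarrow\!\! x$ while lying strictly above its supremum; therefore $x=\bigvee\Downarrow\!\! x$. Applying the relation $x\ll x$ to the directed set $\Downarrow\!\! x$ then produces $d\in\Downarrow\!\! x$ with $x\leqslant d$, i.e.\ $x\leqslant d<x$, which is absurd. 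Hence \eqref{coro:eq3} holds, and \eqref{coro:eq2} follows from the first step.

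The only real subtlety is the uniform treatment of the bottom element. Because $0$ is unavoidably compact, it must be excepted in both \eqref{coro:eq2} and \eqref{coro:eq3}; it is also exactly the element that blocks a one-line ``$\ll$ is idempotent'' derivation of order-density; and in the converse one has to make sure the compact element under consideration is genuinely not the least, so that $\Downarrow\!\! x\neq\emptyset$ and conditional completeness applies. Modulo that caveat everything reduces to the Theorem, Lemma~\ref{lem:impl}, and unwinding the definitions.
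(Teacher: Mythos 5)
Your proof is correct, and its skeleton matches the paper's: the equivalence of \eqref{coro:eq2} and \eqref{coro:eq3} is read off from Lemma~\ref{lem:impl} (the paper states this in one line; your observation that disagreement between $<$ and $\ll$ can only occur on diagonal pairs, i.e.\ at compact elements, is exactly the intended bookkeeping), and your converse argument is the paper's argument with the two contradiction steps swapped: the paper uses compactness of $x$ to rule out $\bigvee \Downarrow\!\! x = x$ and then order-density to contradict $\bigvee \Downarrow\!\! x < x$, while you use order-density to force $\bigvee \Downarrow\!\! x = x$ and then compactness to produce the absurd $x \leqslant d < x$; these are interchangeable. The one place you genuinely deviate is the order-density claim: the paper simply invokes idempotency (interpolation) of $\ll$ on the continuous poset $P$, whereas you argue directly from the dichotomy in the preceding Theorem (if nothing lies strictly between $x$ and $y$, then $\Downarrow\!\! y$ has supremum $x \neq y$, so $y$ is compact, contradicting \eqref{coro:eq3}). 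Your route is slightly longer but more self-contained, and it explicitly covers the edge case that the paper's one-liner leaves implicit: interpolation only yields some $z$ with $x \ll z \ll y$, and when $x = 0$ is compact the interpolant could a priori coincide with $x$, so a word is needed (precisely your covering argument) to upgrade this to strict density at the bottom. So: correct, same overall structure, with a small and defensible refinement in the density step.
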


\begin{proof}
Condition (\ref{coro:eq2}) clearly implies (\ref{coro:eq3}), and the fact that (\ref{coro:eq3}) implies (\ref{coro:eq2}) is a consequence of Lemma~\ref{lem:impl}. 
By continuity of $P$, $\ll$ is idempotent, hence $P$ is order-dense if (\ref{coro:eq2}) is satisfied. 
Assume now that $P$ is conditionally-complete and order-dense. If $x$ is a compact element and $\Downarrow\!\! x \neq \emptyset$, then $\Downarrow\!\! x$ has a supremum $u$. Since $x$ is compact, we cannot have $u = x$, so that $u < x$. But $P$ is order-dense, hence there exists some $v$ with $u < v < x$, which contradicts the definition of $u$. Thus, we get $\Downarrow\!\! x = \emptyset$, i.e.\ $x$ is the least element of $P$. 
\end{proof}



\section{Topologies and convergence on chains}

\subsection{Topologies on chains}

Let $(P,\leqslant)$ be a chain. The \textit{upper topology}, denoted by $\nu(P)$, is the topology generated by the complements $\Uparrow\!\! x$ of principal ideals (as subbasic open sets). Dually, the \textit{lower topology} $\omega(P)$ is the topology generated by the complements $\Downarrow\!\! x$ of principal filters. The \textit{intrinsic topology} $i(P)$ is the join of the lower and the upper topologies. 

\begin{proposition}
Let $(P, \leqslant)$ be a chain equipped with its intrinsic topology. Then $P$ is a pospace (the order $\leqslant$ is closed in $P \times P$), hence Hausdorff, and is a topological lattice. 
\end{proposition}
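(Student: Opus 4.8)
The plan is to treat the three claims in turn — closedness of $\leqslant$ in $P\times P$, the Hausdorff property, and continuity of the lattice operations — using throughout that $i(P)$ is the join $\nu(P)\vee\omega(P)$, so that the sets $\Uparrow\!\! a$ and $\Downarrow\!\! b$ form a subbasis and it suffices to test openness of preimages against these.

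First I would show that the complement of $\leqslant$, namely $\{(x,y)\in P\times P : y<x\}$, is open. Fix $(x,y)$ with $y<x$ and distinguish two cases. If there is some $z$ with $y<z<x$, then $\Uparrow\!\! z\times\Downarrow\!\! z$ is an open neighbourhood of $(x,y)$, and every $(a,b)$ in it satisfies $b<z<a$, hence $b<a$. If instead no such $z$ exists, i.e.\ $x$ covers $y$, then $\Uparrow\!\! y\times\Downarrow\!\! x$ is an open neighbourhood of $(x,y)$: should some $(a,b)$ in it satisfy $a\leqslant b$, we would get $y<a\leqslant b<x$, contradicting the case hypothesis, so again $b<a$. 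In both cases the neighbourhood avoids $\leqslant$, which is thus closed, and $P$ is a pospace. The Hausdorff property then follows: the coordinate-swap map is a homeomorphism of $P\times P$, so $\geqslant$ is closed as well, whence the diagonal $\leqslant\cap\geqslant$ is closed, equivalently $P$ is Hausdorff.

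It remains to see that $P$ is a topological lattice. Being a chain, $P$ is a lattice with $x\vee y=\max(x,y)$ and $x\wedge y=\min(x,y)$, and since reversing the order swaps the two operations while fixing $i(P)$, it is enough to check continuity of $\vee$. This reduces to computing $\vee^{-1}(\Uparrow\!\! a)=(\Uparrow\!\! a\times P)\cup(P\times\Uparrow\!\! a)$ and $\vee^{-1}(\Downarrow\!\! b)=\Downarrow\!\! b\times\Downarrow\!\! b$, both open in the product topology; dually $\wedge^{-1}(\Downarrow\!\! b)=(\Downarrow\!\! b\times P)\cup(P\times\Downarrow\!\! b)$ and $\wedge^{-1}(\Uparrow\!\! a)=\Uparrow\!\! a\times\Uparrow\!\! a$ are open, so both operations are continuous.

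The lattice computation and the passage from pospace to Hausdorff are routine; the one step needing care is the closedness of $\leqslant$, precisely because when $x$ covers $y$ there is no interior point separating them and one must use the one-sided rays $\Uparrow\!\! y$, $\Downarrow\!\! x$ instead of a symmetric pair of intervals. That dichotomy — either a point lies strictly between $x$ and $y$, or the pair is a covering — is exactly what lets the proof go through with no completeness or order-density assumption.
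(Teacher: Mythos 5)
Your proof is correct, and it takes a genuinely different (and more complete) route than the paper's. The paper's own proof addresses only the topological-lattice claim: it argues that $(x,y)\mapsto x\wedge y$ is continuous by exhibiting, for an open $G$ containing $x\wedge y$, a product neighbourhood of $(x,y)$ carried into $G$, and then dualizes for $\vee$; the pospace and Hausdorff assertions are not argued separately there. You instead prove closedness of the order head-on: the complement $\{(x,y):y<x\}$ is handled by the dichotomy ``either some $z$ lies strictly between $y$ and $x$, so $\Uparrow\!\! z\times\Downarrow\!\! z$ works, or $x$ covers $y$, so $\Uparrow\!\! y\times\Downarrow\!\! x$ works,'' which is exactly the right device for getting a pospace with no completeness or order-density hypothesis, and your passage to Hausdorffness via the swap homeomorphism and the diagonal $\leqslant\cap\geqslant$ is routine and correct. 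For the lattice operations, your computation of subbasic preimages, $\vee^{-1}(\Uparrow\!\! a)=(\Uparrow\!\! a\times P)\cup(P\times\Uparrow\!\! a)$, $\vee^{-1}(\Downarrow\!\! b)=\Downarrow\!\! b\times\Downarrow\!\! b$, and dually for $\wedge$, is cleaner than the paper's neighbourhood argument and avoids any case distinction on how the inputs compare. In short, your decomposition (closed order $\Rightarrow$ Hausdorff, then subbasic preimages for $\vee$ and $\wedge$) is more systematic and covers every clause of the statement, whereas the paper's argument buys brevity at the cost of leaving the pospace/Hausdorff part essentially unproved.
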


\begin{proof}
The aim is to show that the map $f : (x, y) \mapsto x\wedge y$ is continuous. Let $G$ be an open subset of $P$ such that $x \wedge y \in G$ for some $x, y \in G$ with $x < y$. Then $(x, y) \in G \times \Uparrow\!\! x$ and $G \wedge \Uparrow\!\! x \subset G$, so we deduce that $f^{-1}(G)$ is open in $P$. Dually, the map $(x, y) \mapsto x\vee y$ is continuous, hence $P$ is a topological lattice. 
\end{proof}

In a poset $P$, we write $B^{\uparrow}$ (resp.\ $B^{\downarrow}$) for the subset made up of upper bounds (resp.\ lower bounds) of the subset $B$.  
It is remarkable that the following proposition holds even for non-complete chains. 

\begin{proposition}
Let $(P,\leqslant)$ be a chain. Then, 
\begin{enumerate}
	\item the upper topology and the Scott topology agree on $P$, 
	\item the lower topology and the dual Scott topology agree on $P$, 
	\item the intrinsic topology, the interval topology, the open-interval topology, 
	the order topology, the bi-Scott topology, the Lawson topology, and the dual Lawson topology agree on $P$. 
\end{enumerate}
\end{proposition}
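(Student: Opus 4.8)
The plan is to prove the chain of equalities by identifying each of the listed topologies with either the upper topology, the lower topology, or their join, and then appealing to parts (1) and (2). First I would recall the relevant definitions on a general poset: the \emph{interval topology} is generated by the sets $\Uparrow\!\! x$ and $\Downarrow\!\! x$ (equivalently, closed subbasis $\uparrow\!\! x$, $\downarrow\!\! x$), the \emph{open-interval topology} is generated by the open intervals $\Uparrow\!\! x \cap \Downarrow\!\! y$, the \emph{order topology} is the one whose closed sets are generated by order-convex sets closed under existing sups and infs of their subsets, the \emph{Scott topology} has as opens the upper sets $U$ with $U \cap D \neq \emptyset$ whenever $D$ is directed with $\bigvee D \in U$, the \emph{Lawson topology} is the join of the Scott topology and the lower topology $\omega(P)$, the \emph{bi-Scott topology} is the join of the Scott and dual Scott topologies, and the \emph{dual Lawson topology} is defined dually. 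The key observation is that on a chain all of these collapse: since $P$ is totally ordered, $\Uparrow\!\! x$ and $\Downarrow\!\! x$ are themselves intervals (indeed complements of principal filters/ideals), so the subbasis $\{\Uparrow\!\! x\} \cup \{\Downarrow\!\! x\}$ generating $i(P)$ is contained in each of the other topologies, giving $i(P) \subseteq \tau$ for $\tau$ any of them; the reverse inclusions are what needs care.

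The main steps, in order, would be: (a) Using parts (1) and (2), rewrite the Lawson topology as $\sigma(P) \vee \omega(P) = \nu(P) \vee \omega(P) = i(P)$, and dually for the dual Lawson topology; likewise the bi-Scott topology equals $\nu(P)\vee\omega(P) = i(P)$. This disposes of four of the seven topologies immediately. (b) For the interval topology $\tau_{\mathrm{int}}$: its defining closed subbasis consists of the sets $\uparrow\!\! x = P \setminus \Downarrow\!\! x$ and $\downarrow\!\! x = P \setminus \Uparrow\!\! x$, which are exactly the complements of the subbasic opens of $i(P)$; hence $\tau_{\mathrm{int}} = i(P)$ essentially by definition. (c) For the open-interval topology: clearly its opens $\Uparrow\!\! x \cap \Downarrow\!\! y$ lie in $i(P)$; conversely I need to recover each subbasic open $\Uparrow\!\! x$ of $i(P)$ as a union of open intervals — here I would use that on a chain $\Uparrow\!\! x = \bigcup_{y} (\Uparrow\!\! x \cap \Downarrow\!\! y)$ unless $\Uparrow\!\! x$ has a greatest element, and handle that degenerate case (and the endpoints $0$, $\infty$) separately, possibly by noting $P$ itself is a basic open. (d) For the order topology: one shows it refines $i(P)$ because $\uparrow\!\! x$ and $\downarrow\!\! x$ are convex and closed under the relevant sups/infs, and conversely that every order-convex set closed under existing directed sups and filtered infs is closed in $i(P)$ — on a chain convexity makes this tractable.

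I expect step (d), the comparison with the order topology, to be the main obstacle, because the order topology is defined via a closure operator (closed sets generated by convex sets stable under existing monotone limits) rather than an explicit subbasis, so showing it is no finer than $i(P)$ requires an argument that the $i(P)$-closure of a point-separating family already has the stability property — this is where the totality of the order, and the fact from Corollary~\ref{coro:eq} and the preceding theorem that $x = \bigvee \Downarrow\!\! x$ whenever $x$ is not compact, does the real work: it guarantees that no new limit points are forced beyond those already captured by $\nu(P)\vee\omega(P)$. A secondary nuisance throughout is the bookkeeping of boundary cases (least and greatest elements, elements with immediate predecessors or successors, sets $\Uparrow\!\! x$ or $\Downarrow\!\! x$ possessing extrema), which must be checked to make the union representations in steps (c) and (d) valid; none of this is deep, but it is where a careless argument would break.
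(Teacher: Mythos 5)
There is a genuine gap: your proposal never proves items (1) and (2) — it ``appeals'' to them in step (a) — yet these are part of the statement and they carry the only chain-specific, nontrivial content. The inclusion you do verify (that $\Uparrow\!\! x$ and $\Downarrow\!\! x$ belong to the Scott, Lawson, bi-Scott, etc.\ topologies, i.e.\ that the upper topology is coarser than the Scott topology) is the easy direction; in a general poset the Scott topology is strictly finer than the upper topology, so the converse is exactly where totality must be used, and nothing in your outline supplies it. The paper's proof does precisely this: given a Scott-closed set $F$, it shows $F = \bigcap_{x \in F^{\uparrow}} \downarrow\!\! x$, which is closed in the upper topology. The key step is that if $y$ lies below every upper bound of $F$ but $y \notin F$, then, since $F = \downarrow\!\! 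F$ and the order is total, $y > f$ for every $f \in F$, so $y$ is an upper bound of $F$, indeed the least one, i.e.\ $y = \bigvee F$; and since a nonempty subset of a chain is automatically directed, Scott-closedness forces $y \in F$, a contradiction. Without this argument (and its dual for (2)), your step (a) — Lawson $=$ Scott $\vee$ lower $=$ upper $\vee$ lower $= i(P)$, and similarly for the bi-Scott and dual Lawson topologies — has no foundation.

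For item (3) your plan diverges from the paper in a secondary way: the paper, once the upper/Scott coincidence is established, simply invokes Al\`o--Frink \cite[Theorem~1]{Alo67} for the classical coincidences of the interval, open-interval and order topologies on a chain, whereas you propose to verify them by hand (steps (b)--(d)). That is legitimate in principle, and your identification of the interval topology with $i(P)$ is essentially definitional; but be careful in step (c): in a chain with endpoints (e.g.\ $[0,1]$) the set $\Uparrow\!\! 0$ is not a union of bounded open intervals, so the open-interval topology must be taken with open rays admitted as subbasic sets (as in Al\`o--Frink), not repaired merely by ``noting $P$ itself is a basic open.'' Also, your expectation that the order-topology comparison needs the continuity theorem and Corollary~\ref{coro:eq} overstates what is required there; the real work of the proposition sits in (1) and (2), which is what you must supply.
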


\begin{proof}
It is well known that every principal ideal is Scott-closed, hence the Scott topology refines the upper topology. Now let $F$ be a Scott-closed subset of $P$. We want to show that $F = \bigcap_{x \in F^{\uparrow}} \downarrow\!\! x$, the latter being closed in the upper topology. The inclusion $\subset$ is clear. So let $y \in \bigcap_{x \in F^{\uparrow}} \downarrow\!\! x$, which means that $y$ is lower than every upper bound of $F$. Assume that $y \notin F$. If $f \in F$, we cannot have $y \leqslant f$ since $F = \downarrow\!\! F$. Consequently, $y > f$, and we deduce that $y$ is an upper bound of $F$, and even the least upper bound: $y = \bigvee F$. Since $F$ is Scott-closed and admits $F$ as a directed subset with supremum, we have $\bigvee F = y \in F$, a contradiction. So $y \in F$, and $F$ is closed in the upper topology. 

Using the equality between the upper and the Scott topology and \cite[Theorem~1]{Alo67}, the other assertions of the theorem follow.  
\end{proof}

\begin{remark}
The proof also shows that the Scott closure operator of a chain coincides with its Dedekind--Mac Neille closure operator. 
\end{remark}



Following e.g.\ Mao and Xu \cite[Definition~3.2]{Mao06}, a poset is called \textit{hypercontinuous} if, for all $x$, the set $\{ y : y \prec x \}$ is directed and admits $x$ as supremum, where $\prec$ is the binary relation defined by $y \prec x \Leftrightarrow x \in (\uparrow\!\! y)^{o}$, the interior of $\uparrow\!\! y$ being taken in the upper topology.  

Mao and Xu \cite[Proposition~3.5]{Mao06} showed that, for a continuous poset, hypercontinuity is equivalent to the coincidence of the upper and the Scott topology (see also \cite[Theorem~VII-3.4]{Gierz03} for the case of continuous lattices), which happens to be satisfied for chains as asserted by the previous theorem. Thus, we have the following statement. 

\begin{corollary}
Every chain is a hypercontinuous poset. 
\end{corollary}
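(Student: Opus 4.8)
The plan is to assemble the two substantial results already established above. By the first Theorem, every chain $P$ is a continuous poset. By the Proposition comparing topologies on a chain, the upper topology and the Scott topology coincide on $P$. The characterization of Mao and Xu \cite[Proposition~3.5]{Mao06}, quoted just before the statement, says that a \emph{continuous} poset is hypercontinuous if and only if its upper and Scott topologies agree. Both hypotheses are now in force, so $P$ is hypercontinuous. That is the whole argument, and it is essentially the one the preceding discussion sets up.

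If one prefers an argument that does not route through \cite{Mao06}, one can identify the auxiliary relation $\prec$ from the definition of hypercontinuity with the way-below relation on $P$. Since the upper topology equals the Scott topology on $P$, the interior of $\uparrow\!\! y$ appearing in the definition of $\prec$ may be computed in the Scott topology; and it is standard that in a continuous poset one has $\mathrm{int}_{\sigma}\uparrow\!\! y = \{\, z \in P : y \ll z \,\}$, the set on the right being Scott-open and maximal among Scott-open subsets of $\uparrow\!\! y$ (see e.g.\ \cite{Gierz03}). Hence $y \prec x$ if and only if $y \ll x$, so that $\{\, y : y \prec x \,\} = \twoheaddownarrow x$, which by continuity of $P$ (first Theorem) is directed with supremum $x$. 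This is exactly the definition of hypercontinuity.

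I do not expect a genuine obstacle here: all the real work has already been spent in proving that chains are continuous and that their upper and Scott topologies coincide. The only point deserving a moment's attention is to confirm that the cited equivalence is being applied within its stated scope --- namely to a continuous poset --- which is precisely what the first Theorem guarantees; everything else is bookkeeping.
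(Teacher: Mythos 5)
Your first paragraph is exactly the paper's argument: it deduces the corollary from the theorem that every chain is continuous, the proposition that the upper and Scott topologies coincide on a chain, and Mao--Xu's characterization of hypercontinuity for continuous posets. The alternative direct identification of $\prec$ with $\ll$ is a correct unpacking of that citation, but the route is the same.
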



We now come to a very important result on the intrinsic topology of chains. 

\begin{theorem}
A chain equipped with its intrinsic topology is a completely normal (= hereditarily normal) space. 
\end{theorem}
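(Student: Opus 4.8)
\medskip
\noindent\textbf{Proof idea.} The plan is to reduce the statement to the classical fact that every linearly ordered topological space is completely normal. By the proposition identifying the intrinsic topology with the order (equivalently interval, Lawson, \dots) topology, $(P,i(P))$ is such a space; and by the proposition that a chain with its intrinsic topology is a pospace, it is Hausdorff, in particular $T_1$. Recall that a Hausdorff space is completely normal exactly when any two \emph{separated} subsets $A$ and $B$ --- meaning $\overline{A}\cap B=\emptyset=A\cap\overline{B}$ --- can be enclosed in disjoint open sets. So I would fix separated $A$ and $B$ and produce the separating open sets directly from the order.

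First I would record the elementary fact that in a chain every convex component of an open set is again open (a convex set is open as soon as, at each of its points, it reaches a little way to either side wherever $P$ has points on that side, and convex components of open sets are of this form). Hence each $x\in A$, lying outside the closed set $\overline{B}$, has an open convex neighbourhood disjoint from $B$, and dually for $y\in B$. The naive guess --- take $U$ to be the union over $x\in A$ of the \emph{whole} convex component of $x$ in $P\setminus\overline{B}$ and $V$ the dual union --- fails: for $A=\{0\}$ and $B=\{1\}$ in $\mathbb{R}$ it gives $U=(-\infty,1)$ and $V=(0,\infty)$. Structurally, convexity forces any point of $U\cap V$ to lie strictly between some $a\in A$ and some $b\in B$, inside a ``gap'' meeting neither set, but a single such gap may be claimed from the left by a component issued from $A$ and from the right by one issued from $B$, and these competing claims have to be arbitrated coherently across \emph{all} pairs at once.

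The device that does this arbitration is to exhibit $(P,i(P))$ as a \emph{monotonically normal} space, the standard route to complete normality for ordered spaces. To each pair $(x,U)$ with $U$ open, $x\in U$, I would attach an open set $\mu(x,U)$ with $x\in\mu(x,U)\subseteq U$ that reaches out of $x$ minimally on each side while remaining a neighbourhood: $\mu(x,U)=\{x\}$ if $x$ is isolated in $U$; otherwise, on each side on which $x$ is a limit point one lets $\mu(x,U)$ run only through a short half-open interval with endpoint $x$ lying inside $U$, and on each side on which $x$ has an immediate neighbour one stops essentially at that neighbour, all choices being canonically determined by $x$ and $U$. The point of such a $\mu$ is the compatibility property ``$\mu(x,U)\cap\mu(y,V)\neq\emptyset$ implies $x\in V$ or $y\in U$'', proved by a short case analysis on the position of a common point relative to $x$ and $y$ together with the minimality of the reach of $\mu$. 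Granting it, complete normality is immediate: for separated $A,B$ put $U=\bigcup_{x\in A}\mu\bigl(x,P\setminus\overline{B}\bigr)$ and $V=\bigcup_{y\in B}\mu\bigl(y,P\setminus\overline{A}\bigr)$ (legitimate since $A\cap\overline{B}=\emptyset$ and $B\cap\overline{A}=\emptyset$); these are open, contain $A$ and $B$, and are disjoint, since a common point would force some $x\in A$ into $P\setminus\overline{A}$ or some $y\in B$ into $P\setminus\overline{B}$. I expect the only real work --- and the main obstacle --- to be the case bookkeeping in the very definition of $\mu$: making it well defined and neighbourhood-shrinking uniformly at one-sided limits, two-sided limits, successor points, and at the extrema $0$ and $\infty$ when they exist; once $\mu$ is pinned down, the rest is routine.
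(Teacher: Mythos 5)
Your reduction of complete normality to the existence of a Borges-type operator $\mu$ is fine (separated $A,B$, take $U=\bigcup_{x\in A}\mu(x,P\setminus\overline{B})$, $V=\bigcup_{y\in B}\mu(y,P\setminus\overline{A})$), and so is the observation that convex components of open sets are open. But the part you defer as ``case bookkeeping'' --- actually defining $\mu$ so that $\mu(x,U)\cap\mu(y,V)\neq\emptyset$ implies $x\in V$ or $y\in U$ --- is the entire content of the theorem, and it is not bookkeeping. In a chain there is no canonical ``minimal reach'': at a one-sided limit point you must \emph{choose} some half-open interval $[x,t)\subseteq U$, and the compatibility property is a constraint tying together the choices made at different points for different open sets. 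Arbitrary (even ``canonical given $x$ and $U$'') choices fail already in $\mathbb{R}$: take $x=0$, $y=1$, $U=(-1,1)$, $V=(0,2)$ and the perfectly legitimate convex choices $\mu(x,U)=(-\frac12,\frac34)$, $\mu(y,V)=(\frac14,\frac32)$; they meet, yet $x\notin V$ and $y\notin U$. In $\mathbb{R}$ one can repair this with the midpoint map, and the verification uses precisely that the midpoint is monotone in both endpoints; a general chain (not embeddable in $\mathbb{R}$, possibly without suprema of the relevant half-components) admits no such monotone interpolant, and the genuine proof that linearly ordered spaces are monotonically normal (Heath--Lutzer--Zenor) has to split cases on jumps, on whether the half-components have last elements, etc., in a way that is considerably more delicate than shrinking to ``short intervals''. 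As it stands, your sketch asserts the key lemma rather than proving it, and the heuristic (``minimality of the reach of $\mu$'') on which you base the compatibility check is the step that breaks.

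For comparison: the paper does not prove the statement at all but cites Al\`o--Frink and Al\`o (and, in a remark, Steen's stronger hereditary collectionwise normality). If you want a self-contained argument closer to the classical one, a cleaner route than a pointwise operator is: (i) reduce hereditary normality to normality of chains, using the paper's last theorem --- an open subspace splits into disjoint open convex pieces, each again a chain in its subspace topology, and for separated $A,B$ one works inside the open subspace $P\setminus(\overline{A}\cap\overline{B})$; (ii) prove normality of a chain by looking at the convex components of $P\setminus(H\cup K)$ for disjoint closed $H,K$ and arbitrating each ``conflict'' component once (choosing a single splitting point in it, or using its ends), rather than arbitrating over all pairs of points simultaneously. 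Alternatively, keep your monotone-normality strategy but then the Heath--Lutzer--Zenor construction must be carried out in full (or cited); it cannot be waved through as routine.
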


\begin{proof}
See e.g.\ Al\'o and Frink \cite{Alo67} and Al\`o \cite[Theorem~1]{Alo72}. 
\end{proof}

\begin{remark}
Steen \cite{Steen70} proved the stronger result that every chain is a hereditarily collectionwise normal space. 
\end{remark}

As a corollary, we have that every chain is Tychonoff, i.e.\ a completely regular Hausdorff space (recall every normal $T_1$ space is Tychonoff). 
This fact can also be seen by \cite[Proposition~IV-3.21]{Gierz03}, which asserts that every chain allows an embedding into a cube (that is, a lattice $[0, 1]^X$) that preserves all existing sups and infs. 
Ern\'e \cite{Erne91c} proved a stronger result. Define a map $f$ between posets $P$ and $Q$ to be \textit{cut-stable} if, for all $A \subset P$, $f(A^{\uparrow})^{\downarrow} = f(A)^{\uparrow\downarrow}$ and $f(A^{\downarrow})^{\uparrow} = f(A)^{\downarrow\uparrow}$. Then, by \cite[Corollary~4.4]{Erne91c}, a chain admits a cut-stable embedding in a power-set, hence is Tychonoff. Note that every cut-continuous map is continuous with respect to interval topologies. 
The following theorem is a bit more precise. 




\begin{theorem}\label{thm:scr}
A chain $P$ equipped with its intrinsic topology is a strictly completely regular pospace, i.e.\  
\begin{enumerate}
	\item\label{thm:scr1} $P$ is strictly locally order-convex, i.e.\ it has a basis of open order-convex neighbourhoods, 
	\item\label{thm:scr2} for all closed lower (resp.\ upper) subset $A$ and $x \notin A$, there exists some continuous order-preserving map $f : P \rightarrow [0,1]$ such that $f(A) = \{0\}$ and $f(x) = 1$ (resp.\ $f(A) = \{1\}$ and $f(x)=0$). 
\end{enumerate}
\end{theorem}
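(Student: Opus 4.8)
I would handle the two conditions separately; neither is hard, and the only part that needs care is making (\ref{thm:scr2}) work without any completeness of $P$. ($P$ is already known to be a pospace, so the content is just (\ref{thm:scr1}) and (\ref{thm:scr2}).) Condition (\ref{thm:scr1}) is purely formal: since $P$ is a chain, $\Uparrow\!\! a \cap \Uparrow\!\! a' = \Uparrow\!\!(a \vee a')$ and $\Downarrow\!\! b \cap \Downarrow\!\! b' = \Downarrow\!\!(b \wedge b')$, so the finite intersections of the subbasic open sets generating $i(P)$ are exactly $P$ itself, the open rays $\Uparrow\!\! a$ and $\Downarrow\!\! b$, and the open intervals $\Uparrow\!\! a \cap \Downarrow\!\! b$. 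Each of these is order-convex, hence they form a basis of $i(P)$ consisting of order-convex open sets, so every point has a neighbourhood basis of order-convex open sets, which is (\ref{thm:scr1}).

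For (\ref{thm:scr2}), order-duality lets me treat only the case of a closed lower set $A$ and a point $x \notin A$; then $A = \downarrow\!\! A$ gives $a < x$ for all $a \in A$, so the set $B := \uparrow\!\! x$ --- which is an upper set and is closed, its complement $\Downarrow\!\! x$ being open --- is disjoint from $A$. What is needed is an order-preserving continuous $f : P \to [0,1]$ equal to $0$ on $A$ and to $1$ on $B$ (the latter forces $f(x) = 1$). The key lemma is that $A$ and $B$ are ``order-normally'' separated: there are disjoint open sets $U \supseteq A$, $V \supseteq B$ with $U$ a lower set and $V$ an upper set. Here I split into two cases and use no completeness. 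If $A \cup B = P$, then $A$ and $B$, being disjoint, complementary, and both closed, are clopen, $A$ lower and $B$ upper, so $U = A$ and $V = B$ work; in fact the function equal to $0$ on $A$ and $1$ on $B$ is then already continuous and order-preserving, so it is the desired $f$. If $A \cup B \neq P$, choose $m \in P \setminus (A \cup B)$; since $A$ is lower and $m \notin A$, every $a \in A$ satisfies $a < m$, and since $B$ is upper and $m \notin B$, every $b \in B$ satisfies $m < b$; hence $U := \Downarrow\!\! m \supseteq A$ and $V := \Uparrow\!\! m \supseteq B$ are disjoint, open, and lower resp.\ upper.

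From the separation lemma I would run the standard order-preserving (Nachbin) Urysohn construction: by dyadic induction produce open lower sets $G_q$ for dyadic $q \in (0,1)$ with $A \subseteq G_q$, $G_q \cap B = \emptyset$, and $\overline{G_q} \subseteq G_{q'}$ whenever $q < q'$ --- at each step separating a closed lower set from a closed upper set as above and keeping the lower half together with its closure --- and then put $f(y) := \inf\{ q : y \in G_q \}$ with $\inf \emptyset := 1$. That $f$ is order-preserving is immediate since the $G_q$ are lower sets; the prescribed values on $A$ and $B$ are clear; and continuity is the usual consequence of $\overline{G_q} \subseteq G_{q'}$. The one chain-specific point to verify is that the induction stays inside lower sets, i.e.\ that the closure in $i(P)$ of a lower set is again a lower set (equivalently, the interior of an upper set is upper); this is easily checked from the explicit basis of $i(P)$ obtained in proving (\ref{thm:scr1}), by a short case analysis on which basic open set witnesses a point's membership in the interior.

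The main obstacle I anticipate is bookkeeping rather than anything deep: keeping the whole Urysohn construction consistently one-sided, and checking the lower/upper stability of closures and interiors. The one genuinely non-classical ingredient is the order-normality lemma, whose content is that the ``cut'' between $A$ and $B$ is always at hand --- either $A$ and $B$ already partition $P$, or an element lies strictly between them --- so that no completeness assumption is ever invoked. (One could instead deduce (\ref{thm:scr2}) from Ern\'e's cut-stable embedding of $P$ into a power set mentioned earlier, composing with the coordinate projections onto $\{0,1\} \subseteq [0,1]$, which are order-preserving and continuous; but checking that some projection separates $x$ from a given closed lower set again comes down to the same dichotomy.)
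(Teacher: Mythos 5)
Your proof is correct in its essentials, but it takes a genuinely different route from the paper. The paper disposes of the theorem by citation: it invokes Xu's theorem on strict complete regularity of the Lawson topology on a continuous poset (using the earlier result that every chain is continuous), checking its hypothesis --- that lower sets closed in the intrinsic (Lawson) topology are closed in the lower topology --- via \cite[Proposition~III-1.6]{Gierz03}, and notes the alternative of deducing the result from K\"unzi's Corollary~4 since a chain is a topological lattice. You instead give a direct, self-contained argument: the explicit order-convex basis of $i(P)$ for item~(\ref{thm:scr1}), and for item~(\ref{thm:scr2}) an order-normality dichotomy (a disjoint closed lower set $A$ and closed upper set $B$ either partition $P$, making both clopen, or admit a point $m$ strictly between them, so $\Downarrow\!\! m$ and $\Uparrow\!\! m$ separate them) fed into a monotone Nachbin--Urysohn construction. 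Your dichotomy lemma is sound, and the one chain-specific verification you flag --- that closures of lower sets are lower --- is indeed true and in fact needs no case analysis on the basis: if $z < y$ and $z \notin L$ with $L$ lower, then $L \subset \Downarrow\!\! z$, so $\Uparrow\!\! z$ is a neighbourhood of $y$ missing $L$, whence $y \notin \overline{L}$. What the two approaches buy: the paper's proof is short and places the result inside the general theory (hypercontinuity, Lawson topology, continuous posets), but hides the real work in the references and depends on the continuity of $P$ and the Scott/upper coincidence established earlier; yours is longer in detail but elementary, uses no domain-theoretic machinery, and makes transparent the paper's advertised point that no completeness hypothesis is ever needed, since the separating cut is produced outright by the total order.
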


\begin{proof}
We use Xu's theorem \cite[Theorem~3.3]{Xu00}, which applies if every lower subset of $P$ that is closed in the intrinsic (Lawson) topology is also closed in the lower topology. But this fact comes from \cite[Proposition~III-1.6]{Gierz03}, which says that if $G = \downarrow\!\! G$, then $G$ is dually Lawson open (i.e., open in the intrinsic topology) iff $G$ is dually Scott open (i.e., open in the lower topology).  
This result is also a consequence of Künzi \cite[Corollary~4]{Kuenzi90}, since every chain is a topological lattice. 
\end{proof}

\begin{remark}
For an example of a completey regular pospace that is not strictly completely regular, see \cite{Kuenzi90}. 
\end{remark}



That every chain be locally order-convex (Item~(\ref{thm:scr1}) of Theorem~\ref{thm:scr}) has been known since Al\`o and Frink, see \cite[Theorem~3]{Alo67}. This latter result and \cite[Theorem~1, page 36]{Gaal64} are gathered in the next theorem. 

\begin{theorem}
Every open subset of a chain (resp.\ a complete chain) is the union, in a unique way, of maximal disjoint open order-convex subsets (resp.\ disjoint open intervals). 
\end{theorem}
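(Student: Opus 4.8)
The plan is to establish the two parts separately, handling the general chain first and then specialising to the complete case.

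For a general chain $P$ and an open subset $G$, I would introduce the equivalence relation on $G$ that identifies $x$ and $y$ whenever the whole order-interval between them (i.e.\ $\{z : x \wedge y \leqslant z \leqslant x \vee y\}$) is contained in $G$; transitivity follows because the union of two overlapping order-convex sets is again order-convex. The equivalence classes are then order-convex by construction, and they are open: if $x$ lies in a class $C$, then since $G$ is open in the intrinsic topology it contains a basic order-convex neighbourhood of $x$ (here I would invoke Item~(\ref{thm:scr1}) of Theorem~\ref{thm:scr}, that $P$ is strictly locally order-convex), and that neighbourhood is visibly contained in the same class. Maximality of each class among order-convex subsets of $G$ is immediate: any order-convex subset of $G$ meeting a class must, by the very definition of the relation, be contained in that class. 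Finally, uniqueness follows from a standard argument: in any decomposition of $G$ into pairwise-disjoint open order-convex pieces, each piece is a union of our classes, but an open order-convex piece cannot properly contain one of our classes without merging two of them, so the pieces coincide with the classes.

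For the complete-chain case, the extra content is that each maximal open order-convex component is actually an open interval, i.e.\ of the form $\Uparrow\!\! a \cap \Downarrow\!\! b$, or $\Uparrow\!\! a$, or $\Downarrow\!\! b$, or all of $P$. Given such a component $C$, completeness lets me set $a = \bigvee (C^{\downarrow})$ and $b = \bigwedge (C^{\uparrow})$ whenever these bounds exist (if $C$ is unbounded below or above one simply drops the corresponding endpoint). The point is that $C$, being order-convex, sits between $a$ and $b$, and I must rule out that either endpoint belongs to $C$: if $a \in C$ then, since $C$ is open, $C$ would contain points below $a$, contradicting that $a$ is the greatest lower bound; similarly for $b$. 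Hence $C = \Uparrow\!\! a \cap \Downarrow\!\! b$ (with the obvious modifications), an open interval.

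The main obstacle is the completeness step, not the combinatorics of the decomposition. On a non-complete chain an open order-convex component can genuinely fail to be an interval (its infimum or supremum need not exist in $P$), which is exactly why the stronger conclusion is reserved for complete chains; so I must be careful that completeness is used precisely to produce the endpoints $a$ and $b$ and nowhere else. Apart from that, everything reduces to the order-convexity bookkeeping and the already-established fact that the intrinsic topology has a basis of order-convex open sets, so I would simply cite \cite[Theorem~3]{Alo67} and \cite[Theorem~1, page 36]{Gaal64} for the details rather than reproduce them in full.
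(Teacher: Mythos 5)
Two separate comments, one on the comparison with the paper and one on a genuine flaw.

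On the comparison: the paper does not prove this theorem at all; it is stated as a compilation of Al\`o--Frink \cite[Theorem~3]{Alo67} and Gaal \cite[Theorem~1, page~36]{Gaal64}, so your decomposition argument is additional content rather than a variant of a proof in the text. Your general-chain half is correct and is essentially the standard argument: the relation ``$x\sim y$ iff the interval between $x$ and $y$ lies in $G$'' is an equivalence (transitivity via the union of two overlapping order-convex sets), its classes are order-convex, they are open by Item~(\ref{thm:scr1}) of Theorem~\ref{thm:scr}, and any order-convex subset of $G$ meeting a class is contained in that class. One small repair in the uniqueness step: a piece of a competing decomposition is \emph{contained in} a single class (not ``a union of our classes''), and it is the assumed maximality of the pieces that then forces each piece to equal the class containing it.

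The complete-chain half, however, rests on a step that is false as stated. You claim that if $a=\bigvee(C^{\downarrow})$ (i.e.\ $\bigwedge C$) belongs to $C$, then openness of $C$ forces $C$ to contain points below $a$. Complete chains need not be order-dense, and this fails whenever $a$ is the least element of $P$ or has an immediate predecessor. Concretely, take $P=[0,1]\cup[2,3]$ (a complete chain) and $G=C=[2,3]=\Uparrow\!\! 1$: this $C$ is open, order-convex and maximal, yet $\bigwedge C=2\in C$ and $C$ contains nothing below $2$. The conclusion of the theorem survives, but only after the case analysis your argument skips: if $\bigwedge C\in C$ then either $\bigwedge C=0$ (so $C$ has the form $\Downarrow\!\! b$ or $P$) or, because some basic open set $\Uparrow\!\! u\cap\Downarrow\!\! v\subset C$ contains $\bigwedge C$, the element $\bigwedge C$ has an immediate predecessor $u$ and $C=\Uparrow\!\! u$ or $\Uparrow\!\! u\cap\Downarrow\!\! b$; dually at the top. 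As written, your endpoint argument proves the complete case only for order-dense chains such as $\mathbb{R}$. (Note also that in a complete chain $C$ is never ``unbounded below or above'', since $0$ and $\infty$ exist; the forms $\Uparrow\!\! a$, $\Downarrow\!\! b$ and $P$ arise from the endpoint-in-$C$ cases just described, not from unboundedness.)
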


%




\bibliographystyle{plain}

\def\cprime{$'$} \def\cprime{$'$} \def\cprime{$'$} \def\cprime{$'$}
  \def\ocirc#1{\ifmmode\setbox0=\hbox{$#1$}\dimen0=\ht0 \advance\dimen0
  by1pt\rlap{\hbox to\wd0{\hss\raise\dimen0
  \hbox{\hskip.2em$\scriptscriptstyle\circ$}\hss}}#1\else {\accent"17 #1}\fi}
  \def\ocirc#1{\ifmmode\setbox0=\hbox{$#1$}\dimen0=\ht0 \advance\dimen0
  by1pt\rlap{\hbox to\wd0{\hss\raise\dimen0
  \hbox{\hskip.2em$\scriptscriptstyle\circ$}\hss}}#1\else {\accent"17 #1}\fi}

\end{document}